\newcommand{\R}{\mathbf{R}}
\newcommand {\E}{\mathrm{E}}
\renewcommand{\d}{\text{\rm d}}
\newcommand{\sL}{\mathcal{L}}
\newcommand{\sP}{\mathcal{P}}
\newtheorem{stat}{Statement}[section]
\newtheorem{proposition}[stat]{Proposition}
\newtheorem{corollary}[stat]{Corollary}
\newtheorem{theorem}[stat]{Theorem}
\newtheorem{lemma}[stat]{Lemma}
\newtheorem{Condition}[stat]{Condition}
\theoremstyle{definition}
\numberwithin{equation}{section}
\begin{document}

\title{\bf On non-existence of global solutions to a class of stochastic heat equations.}%
	
\author{Mohammud Foondun \and Rana D. Parshad}

%
\maketitle
\begin{abstract}
	We consider nonlinear parabolic
	SPDEs of the form 
	$\partial_t u=-(-\Delta)^{\alpha/2} u + b(u) +\sigma(u)\dot w$, where
	$\dot w$ denotes space-time white noise. The functions 
	 $b$ and $\sigma$ are both locally Lipschitz continuous. Under some suitable conditions on the parameters of these SPDEs, we show that the second moment of their solutions blow up in finite time.  This complements recent works of Khoshnevisan and his coauthors; see for instance \cite{FK},\cite {FK1} and \cite{CJK} as well as those of Chow \cite{Chow1} and \cite{Chow2}.  Furthermore, upon comparing our stochastic equations with their deterministic counterparts, we find that our results indicates that the presence of noise might affect the occurrence of blow-up.


		\vskip .2cm \noindent{\it Keywords:}
		Stochastic partial differential equations, stable processes,
		Liapounov exponents, weak intermittence.\\
		
	\noindent{\it \noindent AMS 2000 subject classification:}
		Primary: 60H15; Secondary: 82B44.\\
		
	\noindent{\it Running Title:} Non-existence and 
		parabolic SPDEs.
\end{abstract}

\section{Introduction}

In \cite{FK}, \cite{FK1} and \cite{CJK}, Khoshnevisan and his coauthors initiated a research program, where they systematicaly studied the large time behavior of stochastic equations of the following type
\begin{equation}\label{eq0}\begin{split}
\partial_t u(t,\,x)&=\sL u(t,\,x)+\sigma(u(t,\,x))\dot{F}(t,\,x)\\
u(0\,,x)&=u_0(x),\\
\end{split}
\end{equation}
where $\sL$ is the generator of a L\'evy process, $\sigma$ is a globally Lipschitz function, $\dot{F}$ is a noisy perturbation, and $u_0(x)$ is specified initial data satisfying certain suitable conditions. Among other things, the authors studied the {\it upper p-th moment Liapounov exponent} $\bar{\gamma}(p)$ defined as

\begin{equation*}
 \bar{\gamma}(p):=\limsup_{t\rightarrow \infty} \frac{1}{t}\ln \E(|u(t,\,x)|^p)\quad\text{for all}\quad p\in(0,\infty).
\end{equation*}

 A motivation for this study comes from the Parabolic Anderson model and the phenomenon of intermittency which this model exhibits. Instead of going into details about this phenomenon, we refer the reader to \cite{CarmonaMolchanov:94} for details and references.  One of the main results in the above works, is the existence and strict positivity of those upper Liapounov exponents.  A common assumption in all the works mentioned above, is that  the operator $\sL$ models regular diffusion so one considers say the Laplace operator. Another important assumption is that the multiplicative non-linearity $\sigma$ is assumed globally Lipschitz. Indeed, this is required to ensure global existence of solutions. In this short note, we consider a subclass of \eqref{eq0}, where we  consider a fractional Laplacian operator instead of the diffusion process, and we drop the global Lipschitz property of $\sigma$. More precisely we look at the following equation:

\begin{equation}\label{eq1}\begin{split}
\partial_t u(t,\,x)&=-(-\Delta)^{\alpha/2} u(t,\,x)+b(u(t,\,x))+\sigma(u(t,\,x))\dot{w}(t,\,x)\\
u(0\,,x)&=u_0(x),\\
\end{split}
\end{equation}
where $\{\dot{w}(t,\,x) \}_{t\geq0, x\in \R}$ denotes space-time white noise. $-(-\Delta)^{\alpha/2}$ is the fractional Laplacian; the $L^2$-generator of a symmetric stable process of order $\alpha$, so that $\E \exp(i\xi \cdot X_t)=\exp(-t|\xi|^\alpha)$. We will follow Walsh \cite{Walsh} and interpret the above as an It\^o-type stochastic PDE. The functions $b$ and $\sigma$ are assumed to be positive and locally Lipschitz. A standard localisation argument show that the above equation has a unique random field solution with finite moments, provided that 
\begin{equation*}
\Upsilon(\beta):=\frac{1}{2\pi}\int_{-\infty}^\infty\frac{\d \xi}{\beta+2|\xi|^\alpha} < \infty\quad \text{for all}\quad \beta>0;
\end{equation*}
confer with \cite{FK} and \cite{Dalang}. This implies that $\alpha\in(1,2]$, a condition which will be enforced throughout this paper. We will also assume the initial data $u_0$ is a positive bounded measurable function satisfying the following lower bound:
\begin{equation}\label{ini}
\inf_{x\in R}u_0(x)\geq \kappa,
\end{equation}
where $\kappa$ is a positive constant.
\par
There are many physical motivations to consider a fractional Laplacian operator. These operators appear in many models in non-Newtonian fluids, in models of viscoelasticity such as Kelvin-Voigt models, various heat transfer processes in fractal and disordered media and models of fluid flow and acoustic propogation in porous media. Interestingly, they have also recently been applied to pricing derivative securities in financial markets. See \cite{Lakes2009,La69,B10} for details on a number of the afforementioned applications. The essential feature behind many of these models, is that they aim to capture the nonlinear relation between stress and strain, in non-Newtonian or viscoelastic models perse. Or perhaps, aim to model the disparity between scaling laws associated with diffusion, in euclidian geometries, and fractal geometries, such as certain porous media. Thus there are a plethora of reasons why one might consider investigating the asymptotic behavior of equations of the type \eqref{eq1}. In particular, such investigations might help us glean information in many real physical scenarios, involving sub or super diffusions.
\par
The primary contribution of the current manuscript is to show that under certain conditions on the initial data and parameters, the second moment of the solution to \eqref{eq1}, blows up in finite time, and hence we have non-existence of the upper Liapounov exponents.  When we say that a function $f$ {\it blows up in finite time}, we mean that there exists a time $T$ such that for all $t\geq T$, $f(t)=\infty$. And when we say that a statement holds whenever a quantity $Q$ is {\it large enough}, we mean that the statement holds whenever $Q\geq Q_0$, where $Q_0$ is some positive constant.

 Our result will hold under at least one of the following two conditions which are non-linear growth conditions on $\sigma$ and $b$ respectively.
\begin{Condition}\label{A}
There exist constants $\beta$ and $K_1>0$, both strictly positive such that 
\begin{equation*}
\inf_{x\in R}\frac{\sigma(x)}{|x|^{1+\beta}}\geq K_1.
\end{equation*}
\end{Condition}
 
\begin{Condition}\label{B}
There exist constants $\gamma$ and $K_2$ both strictly positive such that 
\begin{equation*}
\inf_{x\in R}\frac{b(x)}{|x|^{1+\gamma}}\geq K_2.
\end{equation*}
\end{Condition}

Denote $\{u(t,\,x)\}_{t\geq 0,\,x\in \R}$ as the solution to \eqref{eq1} so that $\{\E |u(t,\,x)|^2\}_{t\geq 0,\,x\in \R}$ denotes the corresponding second moment.
 
 \begin{theorem}\label{theorem1}
Suppose $\{u(t,\,x)\}_{t\geq 0,\,x\in \R}$ denote the unique solution to \eqref{eq1}. We then have the following 
\begin{itemize}
\item[(a)] Under Condition \ref{A}, $\{\E |u(t,\,x)|^2\}_{t\geq 0,\,x\in \R}$ the second moment of the solution,  blows up in finite time whenever $\kappa$ is large enough.
\item[(b)] Under Condition \ref{B}, $\{\E |u(t,\,x)|^2\}_{t\geq 0,\,x\in \R}$ blows up in finite time for any initial data satisfying \eqref{ini}, whenever $\gamma <\alpha$. But if $\gamma \geq \alpha$, finite time blow up occurs only whenever $\kappa$ is large enough.
 \end{itemize}

 \end{theorem}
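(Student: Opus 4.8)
The plan is to pass to the Walsh mild form of \eqref{eq1},
\begin{equation*}
u(t,x)=(p_t*u_0)(x)+\int_0^t\!\!\int_\R p_{t-s}(x-y)\,b(u(s,y))\,\d y\,\d s+\int_0^t\!\!\int_\R p_{t-s}(x-y)\,\sigma(u(s,y))\,w(\d s,\d y),
\end{equation*}
where $p_t$ is the transition density of the symmetric $\alpha$-stable process, and to extract from it a closed nonlinear ``renewal-type'' integral inequality for $F(t):=\inf_{x\in\R}\E|u(t,x)|^2$ (and, for part (b), for the first moment $m(t):=\inf_{x\in\R}\E u(t,x)$). Two preliminary remarks: since $b\ge0$ the drift integral is pointwise nonnegative, so $u(t,x)\ge(p_t*u_0)(x)+I_\sigma(t,x)\ge\kappa+I_\sigma(t,x)$ a.s., where $I_\sigma(t,x)$ is the stochastic integral; hence $m(t)\ge\kappa$ and $F(t)\ge m(t)^2\ge\kappa^2$ for every $t$. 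Also, the scaling identity $\int_\R p_r(z)^2\,\d z=C_\alpha\,r^{-1/\alpha}$ holds, and is finite and locally integrable in $r$ exactly because $\alpha\in(1,2]$ --- the same computation that makes $\Upsilon(\beta)$ finite.

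For part (a) I would apply the Walsh isometry to the mild form. Modulo the covariance between the drift and the noise integrals, this yields
\begin{equation*}
\E|u(t,x)|^2\ \ge\ \kappa^2+\int_0^t\!\!\int_\R p_{t-s}(x-y)^2\,\E\big[\sigma(u(s,y))^2\big]\,\d y\,\d s-(\text{correction}),
\end{equation*}
and the correction can be controlled, using $u\ge\kappa+I_\sigma$ together with a moment bound on $I_\sigma$ and Chebyshev's inequality, by a quantity negligible against the main term when $\kappa$ is large --- this is, I expect, exactly what forces the hypothesis ``$\kappa$ large enough'' in (a). Condition~\ref{A} gives $\sigma(u)^2\ge K_1^2|u|^{2+2\beta}$, so by Jensen's inequality (convexity of $z\mapsto z^{1+\beta}$) one has $\E[\sigma(u(s,y))^2]\ge K_1^2(\E|u(s,y)|^2)^{1+\beta}\ge K_1^2F(s)^{1+\beta}$. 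Substituting, using the kernel identity and taking the infimum over $x$, gives
\begin{equation*}
F(t)\ \ge\ \kappa^2+c_1\int_0^t(t-s)^{-1/\alpha}F(s)^{1+\beta}\,\d s ,\qquad c_1=c_1(K_1,\alpha)>0 .
\end{equation*}

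For part (b) I would discard the noise and retain the drift, using $\E|u(t,x)|^2\ge(\E u(t,x))^2$ and, from Condition~\ref{B} after taking expectations,
\begin{equation*}
\E u(t,x)\ \ge\ \kappa+K_2\int_0^t\!\!\int_\R p_{t-s}(x-y)\,\E|u(s,y)|^{1+\gamma}\,\d y\,\d s .
\end{equation*}
Because $\int_\R p_r(z)\,\d z=1$, when $\gamma$ is small one may bound $\E|u(s,y)|^{1+\gamma}\ge m(s)^{1+\gamma}$ by Jensen and reach the Osgood-type inequality $m(t)\ge\kappa+K_2\int_0^t m(s)^{1+\gamma}\,\d s$. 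In general one has to lower-bound $\E|u(s,y)|^{1+\gamma}$ by interpolating between $\E|u(s,y)|^2$ and a higher moment, whose growth is itself governed by the stable kernel; this is where $\alpha$ enters, and the resulting convolution kernel carries a time singularity of order $(t-s)^{-\gamma/\alpha}$, integrable precisely when $\gamma<\alpha$. In that regime the inequality is self-improving for every $\kappa>0$; when $\gamma\ge\alpha$ this route is unavailable and one falls back on a cruder estimate that yields blow-up only for $\kappa$ large.

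The remaining ingredient is a blow-up lemma: any $G:[0,\infty)\to[0,\infty]$ with $G(t)\ge a+c\int_0^t(t-s)^{-\rho}G(s)^{1+\eta}\,\d s$ for $a,c>0$, $\rho\in[0,1)$, $\eta>0$ must be $+\infty$ for all $t$ past some finite $T$. I would prove this via the Picard scheme $G_0\equiv a$, $G_{n+1}(t)=a+c\int_0^t(t-s)^{-\rho}G_n(s)^{1+\eta}\,\d s$, checking inductively that $G_n(t)$ exceeds a constant multiple of $a^{(1+\eta)^n}t^{(1-\rho)e_n}$ with $e_n\uparrow\infty$ geometrically and the constants behaving like $\nu^{(1+\eta)^n}$ for a fixed $\nu>0$ (the Beta-function factors being only polynomial in $e_n$); the product then diverges once $t$ passes an explicit threshold, so $G\ge\sup_n G_n=\infty$ there. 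For $\rho=0$ this is the classical Osgood criterion and $T$ is finite for every $a>0$. Applying the lemma to the inequalities above finishes both parts. I expect the genuinely delicate step to be the derivation of the renewal inequality with the right constant: controlling the drift-noise covariance in part (a) (which pins down ``$\kappa$ large enough''), and, in part (b), identifying the exact power of $(t-s)$ so the dichotomy at $\gamma=\alpha$ emerges.
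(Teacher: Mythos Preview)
Your strategy differs substantially from the paper's. You track the spatial infima $\inf_x\E|u(t,x)|^2$ and $m(t)=\inf_x\E u(t,x)$ and aim for renewal inequalities with kernel $(t-s)^{-1/\alpha}$ (or $1$). The paper instead works with the kernel--weighted quantities $F(t)=\int_\R \E|u(t,y)|^2\,p^2(t,y)\,\d y$ and $G(t)=\int_\R \E|u(t,y)|\,p(t,y)\,\d y$; Proposition~\ref{averageBU} shows these blow up iff the pointwise moments do. Using the lower bound $p(t-s,x-y)\ge(s/(t-s))^{1/\alpha}p(s,x)p(s,y)$ (for $t_0\le s\le \tilde c_\alpha t$) together with Lemma~\ref{Holder}, they reduce each to an ODE of the form $Y'(t)\,t^{b}=cY(t)^{1+\eta}$, with $b=(3+4\beta)/\alpha$ in part (a) and $b=\gamma/\alpha$ in part (b). By Proposition~\ref{ODE} such an ODE blows up for every positive initial value when $b\le1$ but only for large initial values when $b>1$; this is precisely the source of ``$\kappa$ large'' in (a) (since $3+4\beta>\alpha$ always) and of the $\gamma\lessgtr\alpha$ dichotomy in (b).

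Your treatment of part (b) rests on a misconception. The inequality $m(t)\ge\kappa+K_2\int_0^t m(s)^{1+\gamma}\,\d s$ that you write down holds for \emph{every} $\gamma>0$, not just small $\gamma$: Jensen gives $\E|u|^{1+\gamma}\ge(\E u)^{1+\gamma}\ge m(s)^{1+\gamma}$ with no restriction, and $\int_\R p_{t-s}(\cdot)\,\d y=1$ produces no power of $t-s$. The resulting ODE $m'=K_2 m^{1+\gamma}$ blows up for every $\kappa>0$. Along your route $\alpha$ simply never enters and no threshold at $\gamma=\alpha$ can appear; the paragraph about ``interpolating between $\E|u|^2$ and a higher moment'' to manufacture a kernel $(t-s)^{-\gamma/\alpha}$ has no basis. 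The dichotomy in the theorem is an artefact of the paper's weighted--average functional, not something the infimum approach can reproduce.

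A parallel remark applies to part (a). Your own blow-up lemma asserts finite-time blow-up of $G(t)\ge a+c\int_0^t(t-s)^{-\rho}G(s)^{1+\eta}\,\d s$ for \emph{every} $a>0$ when $\rho<1$; since $\rho=1/\alpha<1$ here, once your renewal inequality is established it forces blow-up for all $\kappa>0$, not only large $\kappa$. Hence ``$\kappa$ large'' cannot come from the blow-up step. You locate it instead in the drift--noise cross term $2\E[D\,S]$, but the suggested control (``Chebyshev, negligible when $\kappa$ large'') is not an argument: $D$ is random and there is no evident mechanism making $|\E[DS]|$ small relative to $\kappa^2$. In the paper, ``$\kappa$ large'' arises not from any such correction term but from the exponent $b=(3+4\beta)/\alpha>1$ in the comparison ODE above.
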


We remark that blow-up occurs pointwise, that is for each $x\in \R$. This is in contrast with the results in \cite{Chow1} and \cite{Chow2}.

Blow-up problems in deterministic cases, is an area of intense study, starting with the classical work of Fujita \cite{Fu}. Blow up can mean the solutions themselves tends to infinity in finite time or sometimes their derivatives blow up in finite time; for instance solutions to the Burgers' equation, starting with smooth initial data, can experience loss of smoothness in finite time \cite{W74}, that is $\lim_{t\rightarrow T^{\ast}<\infty}||\nabla u||_2^2=+\infty$. The review articles of \cite{levine} and \cite{Deng-Levine} contain a detailed exposition of the subject.   A common approach in tackling these various  {\it blow up} problems is to  consider a ``right" quantity, which blows up in finite time, usually by exercising a differential inequality. The quantity at hand is usually linked to a certain ``norm'' of the solution.  We will use a similar approach to prove our main result. One of the main difficulties here is to find this ``right" quantity; see Proposition \ref{averageBU}.

We now briefly give an outline of the paper. In Section 2, we recall some facts about the heat kernel of the fractional Laplacian and prove some important estimates. In Section 3, we prove the main result. Finally in section 4, we provide some concluding remarks.  Throughout the paper, we use the letter $c$ with or without subscripts to denote a constant whose value is not important and may vary from places to places. And by $f(x)\asymp g(x)$, we mean that there exists a constant $K$ such that $\frac{1}{K}g(x)\leq f(x)\leq K g(x)$.

\section{Some auxiliary results.}
We begin this section by developing some required background. Let $p(t,\,\cdot)$ denote the transition density function of a symmetric stable process of order $\alpha$. The existence of this density is a well known fact; see for instance \cite{Sato}.  Set
\begin{equation}
\label{eqp}
 \sP_t f(x):=\int_{-\infty}^\infty p(t,\,x-y)f(y)\,\d y.
\end{equation}

We can now follow the method of Walsh \cite{Walsh} to see that that \eqref{eq1} admits a mild solution $u$ if and only if $u$ is a predictable process that satisfies
\begin{equation}\label{Soln}\begin{split}
 u(t,\,x)&=\sP_tu_0(x)+\int_{-\infty}^\infty \int_0^tb(u(s,\,y))p(t,\,y-x)\,\d s\,\d y\\
 &+\int_{-\infty}^\infty \int_0^t\sigma(u(s,\,y))p(t,\,y-x)\,w(\d s\,\d y).
\end{split}
\end{equation}
Here the proof of local existence follows via standard localisation means.
The proof of our theorem will involve some bounds on the transition density function appearing in \eqref{eqp}. Our first proposition recalls a few well known properties which we will use to derive those necessary bounds. We do not present proofs but mention \cite{Sato}, where the reader can find lots of details about symmetric stable processes.

\begin{proposition}\label{density}
The transition density $p(t,\,\cdot)$ of a strictly $\alpha$-stable process, satisfies the following 
\begin{itemize}
\item[(a)] \begin{equation}\label{scaling2}p(st,\,x)=t^{-1/\alpha}p(s, t^{-1/\alpha} x ),\end{equation}
\item[(b)] For $t$ large enough such that $p(t,\,0)\leq 1$ and $a>2$, we have 
\begin{equation}\label{monotone2}
p(t,\,(x-y)/a)\geq p(t,\,x)p(t,\,x)\quad \text{for}\quad \text{all}\quad x\in \R,
\end{equation}
\item[(c)] \begin{equation}\label{asymp}
            p(t,\,x)\asymp t^{-1/\alpha}\wedge \frac{t}{|x|^{1+\alpha}}
           \end{equation}
\end{itemize}
\end{proposition}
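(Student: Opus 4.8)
\noindent\emph{Proof proposal.} All three items are structural facts about the symmetric $\alpha$-stable density that I would read off from its Fourier representation
\[
p(t,x)=\frac{1}{2\pi}\int_{-\infty}^\infty e^{-i\xi x-t|\xi|^\alpha}\,\d\xi ,
\]
together with the classical symmetry, unimodality and tail behaviour of stable laws. For part (a) the plan is a one-line change of variables: substituting $\xi=t^{-1/\alpha}\eta$ turns $st|\xi|^\alpha$ into $s|\eta|^\alpha$ and produces a Jacobian $t^{-1/\alpha}$, so $p(st,x)=t^{-1/\alpha}\,p(s,t^{-1/\alpha}x)$. This is merely the self-similarity $X_{st}\overset{d}{=}t^{1/\alpha}X_s$ expressed at the level of densities, and there is no obstacle.

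For part (b) I would use two facts about $p(t,\cdot)$: it is even, and it is unimodal (radially non-increasing, with maximum at the origin) --- both classical for symmetric stable distributions on $\R$. Fixing $x,y$ and assuming without loss of generality $|y|\le|x|$, the hypothesis $a>2$ gives $|(x-y)/a|\le(|x|+|y|)/a\le 2|x|/a<|x|$, so unimodality yields $p(t,(x-y)/a)\ge p(t,x)$; and since $p(t,y)\le p(t,0)\le 1$ in the stated range of $t$, we obtain $p(t,x)p(t,y)\le p(t,x)\le p(t,(x-y)/a)$. (I read the right-hand side of \eqref{monotone2} as $p(t,x)p(t,y)$, the ``$p(t,x)p(t,x)$'' being an evident misprint.) The only genuine input here is the unimodality, which I would simply cite.

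For part (c) the plan is to reduce to $t=1$ using part (a): since $p(t,x)=t^{-1/\alpha}p(1,t^{-1/\alpha}x)$, it suffices to show $p(1,x)\asymp 1\wedge|x|^{-1-\alpha}$, and then $t^{-1/\alpha}\bigl(1\wedge(t^{-1/\alpha}|x|)^{-1-\alpha}\bigr)=t^{-1/\alpha}\wedge t|x|^{-1-\alpha}$, which is \eqref{asymp}. For $|x|\le 1$ the function $p(1,\cdot)$ is continuous, bounded (the Fourier integral converges absolutely) and strictly positive, hence $\asymp 1$ on that range. For $|x|\ge 1$ I would invoke the classical large-$|x|$ asymptotic $p(1,x)\sim c_\alpha|x|^{-1-\alpha}$ (P\'olya; see also \cite{Sato}), which can be obtained from the Mellin/series expansion of $e^{-|\xi|^\alpha}$ together with repeated integration by parts in the Fourier integral; combined with continuity and positivity on any compact annulus $1\le|x|\le R$, this yields a two-sided bound of order $|x|^{-1-\alpha}$ on $\{|x|\ge 1\}$. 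Patching the two regimes gives the claim.

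The one step that is not elementary --- and where I would lean on the literature rather than redo the computation for $\alpha\in(1,2)$ --- is the sharp tail rate $|x|^{-1-\alpha}$ in part (c); parts (a) and (b), by contrast, follow immediately from scaling and unimodality.
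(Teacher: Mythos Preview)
Your proposal is correct; in fact the paper does not prove this proposition at all, but simply records the three properties and refers the reader to \cite{Sato}. Your arguments are the standard ones: the scaling identity in (a) via the Fourier substitution, the unimodality-and-maximum argument in (b), and the reduction to $t=1$ combined with the classical tail asymptotic in (c). Your reading of the right-hand side of \eqref{monotone2} as $p(t,x)p(t,y)$ rather than $p(t,x)p(t,x)$ is also confirmed by how the inequality is applied later in the paper (see the derivation of \eqref{bound-p} in the proof of Proposition~\ref{averageBU}, where the conclusion is $p(t-s,x-y)\ge (s/(t-s))^{1/\alpha}p(s,x)p(s,y)$). So there is nothing to compare: you have supplied the details the paper deliberately omits, and the only place you lean on the literature---the sharp tail rate $|x|^{-1-\alpha}$---is exactly where the paper does too.
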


As a first application of the above proposition, we have the following lemma. We will use this to derive a very useful ODE.

\begin{lemma}\label{Holder}
 Let $0\leq s\leq c_1 t$, where $c_1$ is a strictly positive constant. Then the following inequality holds
 \begin{equation*}
 \int_\R p^2(s,\,x)p^2(t,\,x) \d x\geq c_2t^{-3/\alpha},
 \end{equation*}
 where $c_2$ is a positive constant depending on $\alpha$ and $c_1$.
 \end{lemma}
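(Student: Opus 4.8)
The plan is to obtain the lower bound by discarding most of the real line and integrating only over the small ball $B:=\{x\in\R:|x|\le s^{1/\alpha}\}$, on which \emph{both} transition densities are comparable to their on-diagonal values. This is legitimate because the integrand is nonnegative. The point is that $0\le s\le c_1t$ forces $B\subseteq\{|x|\le (c_1t)^{1/\alpha}\}$, which is precisely the range in which $p(t,\cdot)$ has not yet entered its polynomial decay regime, so restricting to $B$ loses only a constant factor.

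First I would use part (c) of Proposition \ref{density}. If $|x|\le s^{1/\alpha}$ then $s/|x|^{1+\alpha}\ge s^{-1/\alpha}$, so the minimum in \eqref{asymp} is achieved by the first term and $p(s,x)\asymp s^{-1/\alpha}$; in particular there is $c_3=c_3(\alpha)>0$ with $p(s,x)\ge c_3 s^{-1/\alpha}$ for all $x\in B$. For the other factor, if $|x|\le s^{1/\alpha}\le (c_1t)^{1/\alpha}$ then $t/|x|^{1+\alpha}\ge c_1^{-(1+\alpha)/\alpha}t^{-1/\alpha}$, so again by \eqref{asymp} there is $c_4=c_4(\alpha,c_1)>0$ with $p(t,x)\ge c_4 t^{-1/\alpha}$ for all $x\in B$ (this is the step that absorbs $c_1$, in case $c_1>1$).

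Combining the two bounds and using that $B$ has Lebesgue measure $2s^{1/\alpha}$,
\begin{equation*}
\int_{\R} p^2(s,x)p^2(t,x)\,\d x\ \ge\ \int_{B} p^2(s,x)p^2(t,x)\,\d x\ \ge\ 2s^{1/\alpha}\cdot c_3^2 s^{-2/\alpha}\cdot c_4^2 t^{-2/\alpha}\ =\ 2c_3^2c_4^2\, s^{-1/\alpha}t^{-2/\alpha}.
\end{equation*}
Finally, since $s\le c_1t$ gives $s^{-1/\alpha}\ge c_1^{-1/\alpha}t^{-1/\alpha}$, the right-hand side is bounded below by $c_2 t^{-3/\alpha}$ with $c_2:=2c_3^2c_4^2c_1^{-1/\alpha}$, a constant depending only on $\alpha$ and $c_1$, which is the claim.

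I do not expect a genuine obstacle here; the whole argument is bookkeeping. The two points that require a little care are (i) tracking how the constant $c_1$ enters the lower bound for $p(t,\cdot)$ on $B$, and (ii) observing that the apparently asymmetric estimate $s^{-1/\alpha}t^{-2/\alpha}$ is in fact $\gtrsim t^{-3/\alpha}$, which is exactly where the hypothesis $s\le c_1t$ is used. One could instead run everything through the scaling identity \eqref{scaling2}, but the direct use of the two-sided bound \eqref{asymp} is the shortest route.
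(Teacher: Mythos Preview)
Your argument is correct. Restricting to the ball $B=\{|x|\le s^{1/\alpha}\}$ and invoking the two-sided bound \eqref{asymp} does give $\int_\R p^2(s,x)p^2(t,x)\,\d x\gtrsim s^{-1/\alpha}t^{-2/\alpha}\ge c_2 t^{-3/\alpha}$; the dependence on $c_1$ is tracked correctly. (The endpoint $s=0$ is degenerate in both your argument and the paper's, since $p(0,\cdot)$ is not a function; read the hypothesis as $0<s\le c_1t$.)

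The paper takes a different route: it applies H\"older's inequality twice to obtain
\[
\int_\R p^2(s,x)p^2(t,x)\,\d x\ \ge\ \frac{\bigl(\int_\R p(t,x)p(s,x)\,\d x\bigr)^4}{\|p(t,\cdot)\|_{L^2(\R)}^2}\ =\ \frac{p(t+s,0)^4}{p(2t,0)},
\]
and then uses $p(t,0)\asymp t^{-1/\alpha}$ together with $t+s\le(1+c_1)t$. So the paper's proof rests on the semigroup/convolution identity and only the on-diagonal asymptotic, whereas yours uses the full off-diagonal two-sided estimate \eqref{asymp} but no H\"older or Chapman--Kolmogorov. Your approach is shorter and in fact yields the sharper intermediate bound $s^{-1/\alpha}t^{-2/\alpha}$; the paper's argument has the advantage of needing less pointwise information about $p$ (only the value at $0$) and would transfer more readily to semigroups for which a global two-sided kernel bound like \eqref{asymp} is unavailable.
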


 \begin{proof}
 H\"older's inequality with respect to the probability measure $p(s,\,x)\d x$ gives
 \begin{equation*}
 \int_\R p(t,\,x)p(s,\,x)\d x\leq \left( \int_\R p^2(t,\,x)p(s,\,x)\d x\right)^{1/2}.
 \end{equation*}
Another application of H\"older's inequality yields
 \begin{eqnarray*}
 \int_\R p^2(t,\,x)p(s,\,x)\d x&\leq& \left( \int_\R p^2(t,\,x)p^2(s,\,x)\d x\right)^{1/2}\left( \int_\R p^2(t,\,x)\d x\right)^{1/2}\\
 &=& \| p(t,\,\cdot)\|_{L^2(\R)}\left( \int_\R p^2(t,\,x)p^2(s,\,x)\d x\right)^{1/2},
 \end{eqnarray*}
where $\|p(t,\,\cdot)\|_{L^2(\R)}^2:= \int_\R p^2(t,\,x)\d x$.  
 Combining the above two inequalities, we obtain
 \begin{eqnarray*}
 \int_\R p^2(t,\,x)p^2(s,\,x)\d x\geq \frac{\left(  \int_\R p(t,\,x)p(s,\,x)\d x\right)^4}{\| p(t,\,\cdot)\|^2_{L^2(\R)}}.
 \end{eqnarray*}
Note that
 \begin{equation*}
 \int_\R p(t,\,x)p(s,\,x)\d x= p(t+s,\, 0),
 \end{equation*}
 and 
 \begin{equation*}
 \| p(t,\cdot)\|^2_{L^2(\R)}=p(2t,\,0).
 \end{equation*}
 Using the fact that $p(t,\,0)\asymp t^{-1/\alpha}$, we obtain the required inequality.
 \end{proof}

Our next result concerns the blow-up of solutions to the class of ODE mentioned above.  We will use this to conclude the proof of our main result.

\begin{proposition}\label{ODE}
Consider the following ordinary differential equation.
\begin{equation*}
  y'(s)s^b=y(s)^{1+a}
\end{equation*}
where $a,\,b>0$ with initial condition $y(t_0)$. If $b\leq1$, the solution blows up for any positive initial condition $y(t_0)$. However, if $b>1$, then the solution blows up whenever $y(t_0)$ is large enough.
\end{proposition}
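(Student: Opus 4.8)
The plan is to integrate the equation explicitly, since it is separable, and then read off when the solution escapes to infinity in finite time. First I would note that for a positive initial value $y(t_0)>0$ the right-hand side $y(s)^{1+a}/s^b$ is strictly positive wherever the local solution is defined, so $y$ is strictly increasing and stays bounded below by $y(t_0)$; in particular one may divide by $y^{1+a}$. Rewriting the equation as $y(s)^{-1-a}y'(s)=s^{-b}$ and integrating from $t_0$ to $t$ yields
\begin{equation*}
\frac{1}{a}\Bigl(y(t_0)^{-a}-y(t)^{-a}\Bigr)=\int_{t_0}^t s^{-b}\,\d s ,
\end{equation*}
so that
\begin{equation*}
y(t)=\Bigl(y(t_0)^{-a}-a\int_{t_0}^t s^{-b}\,\d s\Bigr)^{-1/a}.
\end{equation*}
This formula defines a finite, smooth, increasing solution exactly on the interval where the bracket is positive; hence the solution blows up at a finite time if and only if the bracket reaches zero at some finite $t$, that is, if and only if $\int_{t_0}^\infty s^{-b}\,\d s > y(t_0)^{-a}/a$.

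Next I would split into the two cases. If $b\le 1$ then $\int_{t_0}^\infty s^{-b}\,\d s=\infty$ (it equals $\ln(t/t_0)\to\infty$ when $b=1$ and $\tfrac{1}{1-b}(t^{1-b}-t_0^{1-b})\to\infty$ when $b<1$), so the criterion holds for every $y(t_0)>0$ and blow-up always occurs. If $b>1$ then $\int_{t_0}^\infty s^{-b}\,\d s=\tfrac{t_0^{1-b}}{b-1}<\infty$, so finite-time blow-up holds precisely when $y(t_0)^{-a}<\tfrac{a\,t_0^{1-b}}{b-1}$, i.e. when $y(t_0)>\bigl(\tfrac{b-1}{a}\,t_0^{b-1}\bigr)^{1/a}$; this is exactly the statement that $y(t_0)$ be large enough.

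I do not expect a genuine obstacle: the proposition is an elementary separation-of-variables computation. The two points requiring a little care are (i) justifying, using the positivity and monotonicity of $y$, that the explicit formula is valid up to the blow-up time, so that the first zero of the bracket is the actual blow-up time rather than a mere formal singularity, and (ii) the borderline subcase $b>1$ with $y(t_0)$ exactly at the threshold $\bigl(\tfrac{b-1}{a}\,t_0^{b-1}\bigr)^{1/a}$, where the bracket tends to $0$ only as $t\to\infty$ and so $y$ does not blow up in finite time; thus "large enough" should be understood as a strict inequality. Both are immediate.
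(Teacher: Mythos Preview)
Your proof is correct and follows the same separation-of-variables approach as the paper: both integrate $y^{-1-a}y'=s^{-b}$ explicitly and read off the blow-up condition from whether $\int_{t_0}^\infty s^{-b}\,\d s$ exceeds $y(t_0)^{-a}/a$, arriving at the identical threshold $y(t_0)^a>\tfrac{(b-1)t_0^{b-1}}{a}$ when $b>1$. Your version is slightly more careful in justifying validity of the formula and in flagging the borderline threshold case, but the argument is otherwise the same.
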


\begin{proof}We begin by looking at the simplest case when $b=1$. Elementary calculus show that the solution $y(t)$ satisfies
\begin{equation*}
 y(t)^{-a}=y(t_0)^{-a}-a\ln (\frac{t}{t_0}).
\end{equation*}
The above shows that $y(t)$ blows up in finite time for any  positive initial condition $y(t_0)$. We now turn to the case when $b\not=1$.  Again some calculus shows that $y(t)$ satisfies 
\begin{equation*}
 y(t)^{-a}=y(t_0)^{-a}+\frac{a\left(t^{1-b}-t_0^{1-b}\right)}{b-1}.
\end{equation*}
In the case that $b>1$, the above implies that $y(t)$ blows up only when 
\begin{equation}\label{ini-cond}
 y(t_0)^a>\frac{t_0^{b-1}(b-1)}{a}.
\end{equation}
When $b<1$, the solution blows up for any positive initial condition.
\end{proof}

We end this section by recalling Jensen's inequality which we will use frequently in the forthcoming section.
\begin{lemma}\label{Jensen}
Let $p(\d x)$ be a probability measure on $\R$ and $u$ be a non-negative function. Then the following holds whenever $f$ is a convex function,
\begin{equation*}
f\left(\int_R u(x)p(\d x) \right)\leq \int_Rf(u(x)) p(\d x).
\end{equation*}
\end{lemma}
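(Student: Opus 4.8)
The plan is to prove Lemma~\ref{Jensen} by the standard supporting-line argument for convex functions. First I would set $m:=\int_\R u(x)\,p(\d x)$, which is well defined and non-negative because $u\geq 0$ and $p$ is a probability measure; this is the point about which $f$ will be linearized. Since $f$ is convex it admits a supporting line at $m$: there exists a real number $\lambda$ (any subgradient of $f$ at $m$, which one may take between the one-sided derivatives of $f$ at $m$, using a one-sided derivative if $m$ is an endpoint of the domain) such that
\begin{equation*}
f(y)\geq f(m)+\lambda\,(y-m)\qquad\text{for every }y\text{ in the domain of }f.
\end{equation*}

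Next I would substitute $y=u(x)$ into this inequality and integrate against $p(\d x)$. Because $p$ has total mass one and $\int_\R u(x)\,p(\d x)=m$, the linear term integrates to zero, giving
\begin{equation*}
\int_\R f(u(x))\,p(\d x)\ \geq\ f(m)+\lambda\left(\int_\R u(x)\,p(\d x)-m\right)\ =\ f(m)\ =\ f\!\left(\int_\R u(x)\,p(\d x)\right),
\end{equation*}
which is precisely the asserted inequality. One should also note that $x\mapsto f(u(x))$ is measurable, so the left-hand integral makes sense: this is immediate since a convex $f$ is continuous on the interior of its domain and $u$ is measurable.

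The only genuine issue here is technical bookkeeping rather than a real obstacle: one must make sure $m$ lies where a supporting line exists and that the integrals are meaningful (allowing the value $+\infty$ on the left if necessary). In every use of this lemma in the next section, $f$ will be a fixed convex function on $[0,\infty)$ — typically a power $t\mapsto t^{1+\beta}$ or $t\mapsto t^2$ — applied to a non-negative quantity with finite $p$-mean, so all these conditions hold automatically and the inequality can simply be invoked.
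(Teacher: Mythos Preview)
Your argument is correct: the supporting-line (subgradient) argument is the standard proof of Jensen's inequality, and the technical caveats you flag are the right ones. The paper itself gives no proof at all --- the lemma is simply \emph{recalled} as a classical fact --- so there is nothing to compare against; your write-up just supplies what the paper omits.
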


\section{Proof of the main result.}

The proof of the main theorem hinges on the next proposition. We first fix a couple of notations.

Denote
 \begin{equation}\label{F}
 F(t):= \int_\R \E |u(t,\,y)|^2 p^2(t,\,y)\d y,
 \end{equation}
 and 
 
 \begin{equation}\label{G}
 G(t):= \int_\R \E |u(t,\,y)| p(t,\,y)\d y,
 \end{equation}
 where $\{u(t,\,x)\}_{t\geq 0,\,x\in \R}$ is the solution to \eqref{eq1}.  We can then state the following proposition. Sugitani  \cite{Su} was the main source of inspiration for its proof.
 
 \begin{proposition}\label{averageBU} The following two statements hold.
 \begin{itemize}
 \item[(a)]  $F(t)$ blows up in finite time if and only if for all $x\in \R$, $\E |u(t,\,x)|^2$ blows up in finite time,
 \item[(b)]  $G(t)$ blows up in finite time if and only if for all $x\in \R$, $\E |u(t,\,x)|$ blows up in finite time.
 \end{itemize}
    
 \end{proposition}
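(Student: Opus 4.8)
The plan is to work throughout with the mild formulation \eqref{Soln} — which the solution satisfies together with its restarted form on any subinterval $[r,t]$ — and with the heat-kernel facts of Proposition \ref{density}, principally the scaling identity \eqref{scaling2} and the convolution-type lower bound \eqref{monotone2}. Parts (a) and (b) are handled by one mechanism; they differ only in that the second moment, through the Walsh isometry, carries the weight $p^2(t,\cdot)$, whereas taking expectations in \eqref{Soln} (the stochastic integral has mean zero) shows that $\E|u(t,x)|$ obeys the genuine scalar integral inequality $\E|u(t,x)|\ge \sP_tu_0(x)+\int_0^t\!\int_\R p(t-s,x-y)\,\E[b(u(s,y))]\,\d y\,\d s$, and so carries the single power $p(t,\cdot)$, matching $\|p(t,\cdot)\|_{L^1(\R)}=1$.

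One implication is essentially immediate. If, for some finite $T$, $\E|u(t,x)|^2=\infty$ for every $x$ once $t\ge T$, then since $p^2(t,\cdot)>0$ the integrand defining $F$ is identically $+\infty$ there, so $F$ blows up; and before any blow-up one has $\sup_x\E|u(t,x)|^2<\infty$ — this follows from the localization construction, using the spatial near-homogeneity forced by $\kappa\le u_0\le\|u_0\|_{L^\infty(\R)}$ and the translation invariance of the kernels — whence $F(t)\le p(2t,0)\sup_x\E|u(t,x)|^2<\infty$. The same holds for $G$, with $p(2t,0)$ replaced by $\|p(t,\cdot)\|_{L^1(\R)}=1$. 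So it remains to prove the converse: if $F$ (resp.\ $G$) first becomes infinite at a finite time $T^\ast$, then $\E|u(t,x)|^2$ (resp.\ $\E|u(t,x)|$) is infinite for every $x$ and every $t>T^\ast$; this also supplies the uniform-in-$x$ blow-up time used just above.

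For that converse, first note that $F(t)\le p(2t,0)\sup_x\E|u(t,x)|^2$ forces $\sup_x\E|u(t,x)|^2\to\infty$ as $t\uparrow T^\ast$, so there are space-time points at which the pointwise second moment is as large as we please (or outright infinite). The main step is to propagate this across all of space. Restart \eqref{Soln} at a time $r<T^\ast$ and bound $\E|u(t,x)|^2$ below by its Walsh-isometry (noise) contribution $c\int_r^t\!\int_\R p(t-s,x-y)^2\,\E[\sigma(u(s,y))^2]\,\d y\,\d s$ — and for (b), bound $\E|u(t,x)|$ below by the convex estimate coming from the scalar integral inequality and Jensen (Lemma \ref{Jensen}). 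Restrict the $s$-integral to $s\asymp t$ (so that also $t-s\asymp s$), bound $p(t-s,x-y)$ below by a constant multiple of $p(cs,x)\,p(cs,y)$ via \eqref{scaling2} and \eqref{monotone2}, and then, since $s\asymp t$, extract the strictly positive, $s$- and $y$-independent factor $p(ct,x)^2$ (resp.\ $p(ct,x)$). What is left is, up to constants, $\int_{c_1t}^{c_2t}F(s)\,\d s$ (resp.\ $\int_{c_1t}^{c_2t}G(s)\,\d s$); choosing $t$ so that $[c_1t,c_2t]$ straddles $T^\ast$ makes this $+\infty$, hence $\E|u(t,x)|^2=\infty$ (resp.\ $\E|u(t,x)|=\infty$) for every $x\in\R$, as required.

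The hard part will be this propagation step. The subtlety is that infinitude of a moment at a single space-time point does not by itself force the weighted integral $F$ (or $G$) to diverge, so one cannot restart the equation at the blow-up time; the remedy is to restart strictly before $T^\ast$ and let the smoothing of $\sP$ over a positive time interval — quantified by \eqref{scaling2} and \eqref{monotone2} — be what couples $u(t,x)$ at an arbitrary $x$ to the already-diverging global integral on $[c_1t,c_2t]$. The accompanying technical points are: handling the cross term in the second-moment lower bound, where positivity of $u$, $b$ and $\sigma$ is used; relating $\E[\sigma(u(s,y))^2]$ and $\E[b(u(s,y))]$ back to $\E|u(s,y)|^2$ and $\E|u(s,y)|$ — using positivity and the growth of the nonlinearities — so that $F$ and $G$ genuinely reappear on the right-hand side; and checking that $F$ and $G$ are finite before $T^\ast$ and truly diverge as $t\uparrow T^\ast$.
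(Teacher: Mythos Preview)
Your proposal is correct and follows essentially the same route as the paper: dismiss the easy implication, then for the hard one drop to the noise (resp.\ drift) contribution in the mild formulation, restrict the time integral to $s\asymp t$, use \eqref{scaling2} and \eqref{monotone2} to factor $p(t-s,x-y)\ge c\,p(s,x)p(s,y)$, pull out the strictly positive $p(s,x)^2$ (resp.\ $p(s,x)$), and use the growth conditions together with Jensen to make $F$ (resp.\ $G$) reappear inside the $s$-integral. The ``restart at $r<T^\ast$'' is harmless but unnecessary (the paper simply restricts the $[0,t]$ integral to $[t_0,\tilde c_\alpha t]$), and your ``what is left is $\int F(s)\,\d s$'' is a slight shorthand---the paper carries the exponent $1+\beta$ and the normalizing factor $\|p(s,\cdot)\|_{L^2}^2$ through one more application of Jensen---but this does not affect the blow-up conclusion.
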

 
 \begin{proof}
 We will only prove part (a) of the proposition. The proof of the second part is similar. Note that it is enough to show that $\E |u(t,\,x)|^2=\infty$ whenever $F(t)=\infty$ for some large $t$.  The other implication is easy.  
Using property \eqref{scaling2}, we can write
 \begin{eqnarray*}
 p(t-s,\,x-y)=\left(\frac{s}{t-s}\right)^{1/\alpha} p(s,\,\left(\frac{s}{t-s}\right)^{1/\alpha}(x-y))
 \end{eqnarray*}
Let $\tilde{c}_\alpha:=\frac{1}{2^\alpha+1}$. We can now use \eqref{monotone2} to bound the right hand side of the above display as follows.
 \begin{equation}\label{bound-p}
 p(t-s,\,x-y)\geq\left(\frac{s}{t-s}\right)^{1/\alpha}p(s,\,x)p(s,\,y),
 \end{equation}
for $t_0 \leq s\leq \tilde{c}_\alpha t$, where $t_0$ is chosen large enough so that $p(s,\,0)\leq 1$ for all $s\geq t_0$.
 From the mild formulation of the solution i.e \eqref{Soln} and the positivity of $b$ and the initial condition $u_0(x)$, we obtain
 \begin{equation}\label{L_2-mild}
 \E |u(t,\,x))|^2\geq\int_0^t \int_\R\E |\sigma(u(s,\,y))|^2 p^2(t-s,\,x-y)\d y\,\d s.
 \end{equation}
Since the integrand appearing in the right hand side of the above display is strictly positive, we have 
\begin{equation*}
 \E |u(t,\,x))|^2\geq \int_{t_0}^{\tilde{c}_\alpha t} \int_\R\E |\sigma(u(s,\,y))|^2 p^2(t-s,\,x-y)\d y\,\d s. 
\end{equation*}

 Using the bound \eqref{bound-p}, and the lower bound on $\sigma$ together with Jensen's inequality, we can write
 \begin{equation}\label{lower}\begin{split}
 \E |u(t,\,x))|^2&\geq \int_{t_0}^{\tilde{c}_\alpha t} \left(\frac{s}{t-s}\right)^{2/\alpha}p^2(s,\,x)\int_\R\E |\sigma(u(s,\,y))|^2 p^2(s,\,y)\d y\,\d s\\
 &\geq c_1\int_{t_0}^{\tilde{c}_\alpha t} \left(\frac{s}{t-s}\right)^{2/\alpha}p^2(s,\,x)\int_\R(\E |u(s,\,y)|^2)^{1+\beta} p^2(s,\,y)\d y\,\d s\\
 &\geq c_1\int_{t_0}^{\tilde{c}_\alpha t} \left(\frac{s}{t-s}\right)^{2/\alpha}p^2(s,\,x)\|p(t,\,\cdot)\|^2_{L^2(\R)}\\
 &\times \int_\R(\E |u(s,\,y)|^2)^{1+\beta} \frac{p^2(s,\,y)}{\|p(s,\,\cdot)\|^2_{L^2(\R)}}\d y\,\d s\\
&\geq c_1\int_{t_0}^{\tilde{c}_\alpha t} \left(\frac{s}{t-s}\right)^{2/\alpha}p^2(s,\,x)\|p(s,\,\cdot)\|^2_{L^2(\R)}\\
 &\times \left(\int_\R \E |u(s,\,y)|^2 \frac{p^2(s,\,y)}{\|p(s,\,\cdot)\|^2_{L^2(\R)}}\d y\right)^{1+\beta}\,\d s.
\end{split}
 \end{equation}
 The result now follows from the above inequality and the strict positivity of the transition function.
 \end{proof}
 
 We are now ready to prove our main result, Theorem \ref{theorem1}.
\begin{proof}
We proceed in two steps. The first step will be carried out assuming condition \ref{A}, while the second step will be completed under condition \ref{B}.

{\it Step 1:}
In view of Proposition \ref{averageBU}, we aim to show  that $F(s)$ blows up in finite time. We start with \eqref{Soln}. Bearing in mind that $b$ is positive, we end up with
 \begin{equation*}\begin{split}
\int_{\R}\E |u(t\,,x)|^2p^2(t,\,x)\d x&\geq \int_{\R}\left|\int_\R u_0(y)p(t,\,x-y)\d y\right|^2 p^2(t,\,x)\d x\\
&+\int_{\R} \int_\R\int_0^t\E|\sigma(u(s,\,y))|^2p^2(t-s,\,x-y)\d s \d y  p^2(t,\,x)\d x\\
&:=I_1+I_2.
\end{split}
 \end{equation*}
We bound the $I_1$ first as this is straightforward. The lower bound on the initial condition yields
 \begin{equation*}
 \int_\R u_0(y)p(t,\,x-y)\d y\geq \kappa.
 \end{equation*}
 Hence, we have 
 \begin{equation*}
 I_1\geq \kappa^2 \|p(t,\cdot)\|^2_{L^2(\R)}.
 \end{equation*}
 
 To bound $I_2$, we will use some of the estimates from the proof of Proposition \ref{averageBU}. We first use inequality \eqref{lower} to obtain 
 \begin{equation*}\begin{split}
 I_2 &\geq c_1\int_{t_0}^{\tilde{c}_\alpha t} \int_{\R}\left(\frac{s}{t-s}\right)^{2/\alpha}p^2(s,\,x)p^2(t,\,x)\|p(s,\,\cdot)\|^2_{L^2(\R)}\d x\\
 &\times \left(\int_\R \E |u(s,\,y)|^2 \frac{p^2(s,\,y)}{\|p(s,\,\cdot)\|^2_{L^2(\R)}}\d y\right)^{1+\beta}\,\d s.
 \end{split}
 \end{equation*}
 We now use the fact that $\|p(s,\,\cdot)\|^2_{L^2(\R)}\asymp s^{-1/\alpha}$ and the lower bound obtained in Lemma \ref{Holder} to write
 
\begin{eqnarray*}
I_2&\geq& c_1\int_{t_0}^{\tilde{c}_\alpha t}\left(\frac{s}{t-s}\right)^{2/\alpha}t^{-3/\alpha}s^{\beta/\alpha}\left(\int_{\R}\E|u(s,\,y)|^2p^2(s,\,y)\d y \right)^{1+\beta}\d s\\
&\geq& c_1\int_{t_0}^{\tilde{c}_\alpha t}\frac{s^{(2+\beta)/\alpha}}{t^{5/\alpha}} \left(\int_{\R}\E|u(s,\,y)|^2p^2(s,\,y)\d y \right)^{1+\beta}\d s\\
\end{eqnarray*}

 We now combine the estimates for $I_1$ and $I_2$ to obtain
 \begin{equation*}\begin{split}
& \int_{\R}\E|u(s,\,y)|^2p^2(s,\,y)\d y\\
&\qquad \geq c_2\int_{t_0}^{\tilde{c}_\alpha t}\frac{s^{(2+\beta)/\alpha}}{t^{5/\alpha}} \left(\int_{\R}\E|u(s,\,y))|^2p^2(s,\,y)\d y \right)^{1+\beta}\d s\\
&\qquad+\kappa^2 c_3 t^{-1/\alpha},\\
\end{split}
\end{equation*} 
 
 The definition of $F$ and some algebra yield
\begin{equation*}
 F(t)t^{5/\alpha}\geq \kappa^2 c_3 t^{4/\alpha}+c_2\int_{t_0}^{\tilde{c}_\alpha t}s^{(2+\beta)/\alpha} F(s)^{1+\beta}\d s,.
\end{equation*}
We set $Y(t):=F(t)t^{5/\alpha}$ to obtain 
\begin{equation*}
 Y(t)\geq \kappa^2 c_3 t^{4/\alpha}+c_2\int_{t_0}^{\tilde{c}_\alpha t}\frac{Y(s)^{1+\beta}}{s^{(3+4\beta)/\alpha}}\d s.
\end{equation*}

We are interested in the blow-up of $Y(\cdot)$, by a comparison principle; see for instance \cite{smo}, it suffices to consider the following ordinary differential equations.
\begin{eqnarray*}
 Y'(t)t^{(3+4\beta)/\alpha}=c_4Y(t)^{1+\beta}\quad \text {for}\quad t\geq t_0,
\end{eqnarray*}
with initial condition $Y(t_0)=c_3\kappa^2 t_0^{4/\alpha}$.  Note that $3+4\beta>\alpha$, so we can use Proposition \ref{ODE} to conclude that the above equation blows up in final time whenever $\kappa$ is large enough.  Hence $F$ blows up in finite time as well. An application of Proposition \ref{averageBU} concludes the first part of the proof.

{\it Step 2:}  As in the previous step, we start with the mild formulation of the solution but take expectation to obtain

\begin{equation*}
 \E u(t,\,x)=\int_{\R}u_0(y)p(t,\,x-y)\, \d y+ \int_0^t\int_{\R} \E b(u(s,\,y) p(t-s,\,x-y)\,\d y\,\d s
\end{equation*}

We aim to show that the function $G(s)$ defined by \eqref{G} blows up in finite time. Proposition \ref{averageBU} and an application of H\"older's inequality will then yield the result.  We begin by multiplying the above display by $p(t,\,x)$ and integrate over $\R$ to obtain
\begin{eqnarray*}
 G(s)&=&\int_{\R}\int_{\R}u_0(y)p(t,\,x-y)\, \d y p(t,\,x)\, \d x\\
&+&\int_{\R}\int_0^t\int_{\R} \E b(u(s,\,y)) p(t-s,\,x-y)\,\d y\,\d s\,p(t,\,x) \d x\\
&:=&I_3+I_4.
\end{eqnarray*}
$I_3$ is easily seen to be bounded below by $\kappa$.  We now bound $I_4$. As in Proposition \ref{averageBU}, there exists a $t_0>0$ large enough so that for $t_0\leq s\leq \tilde{c}_\alpha t$,  \eqref{bound-p} holds. This bound together with the growth condition on $b$ gives

\begin{eqnarray*}
 I_4&=&\int_{\R}\int_0^t\int_{\R}\E b(u(s,\,y)) p(2t-s,\,x-y)\,\d y\,\d s p(t,\,x)\,\d x\\
&\geq&c_5\int_{\R}\int_{t_0}^{\tilde{c}_\alpha t}\left(\frac{s}{2t-s}\right)^{1/\alpha}\int_{\R} \E |u(s,\,y)|^{1+\gamma} p(s,\,y)\,\d y\,\d s p(t,\,x)\,\d x.\\
\end{eqnarray*}
We now apply Jensen's inequality twice to obtain 
\begin{eqnarray*}
I_4&\geq&c_5\int_{t_0}^{\tilde{c}_\alpha t}\left(\frac{s}{2t-s}\right)^{1/\alpha}\left(\int_{\R} \E |u(s,\,y)|  p(s,\,y)\,\d y\right)^{1+\gamma}\,\d s\\
&\geq&c_5\int_{t_0}^{\tilde{c}_\alpha t}\left(\frac{s}{2t}\right)^{1/\alpha}G(s)^{1+\gamma}\,\d s.\\
\end{eqnarray*}

Combining the bounds on $I_3$ and $I_4$, we obtain
\begin{equation*}
 G(t)\geq \kappa +c_5\int_{t_0}^{\tilde{c}_\alpha t}\frac{(G(s)s^{1/\alpha})^{1+\gamma}}{2t^{1/\alpha}s^{\gamma/\alpha}}\,\d s,
\end{equation*}
which in turn yields

\begin{equation*}
 G(t)t^{1/\alpha}\geq \kappa t^{1/\alpha} +c_5\int_{t_0}^{\tilde{c}_\alpha t}\frac{(G(s)s^{1/\alpha})^{1+\gamma}}{2 s^{\gamma/\alpha}}\,\d s\quad \text{for}\quad t\geq t_0.
\end{equation*}

We set $Y(t):=G(t)t^{1/\alpha}$ and look at the following ODE;  $Y'(t)t^{\gamma/\alpha}=c_6 Y(t)^{1+\gamma}$ with $Y(t_0)=\kappa t_0^{1/\alpha}$.  Now if $\gamma/\alpha\leq1$, then $Y(t)$ blows up in finite time, whenever the initial condition satisfies \eqref{ini}. But if $\gamma/\alpha>1$, then $Y(t)$ blows up whenever $\kappa$ is large enough. Since $G(t)$ blows up whenever $Y(t)$ blows up, the second part of the theorem is proved.
\end{proof}

We end this section with the following corollary whose proof is obtained by setting $b(u)=0$ in Theorem \ref{theorem1}. 

\begin{corollary}
\label{cor}
Consider the following equation

\begin{equation}\label{eq12}\begin{split}
\partial_t u(t,\,x)&=-(-\Delta)^{\alpha/2} u(t,\,x)+\sigma(u(t,\,x))\dot{w}(t,\,x)\\
u(0\,,x)&=u_0(x),\\
\end{split}
\end{equation}

Suppose that condition (1.1) is satisfied, then the second moment of the solution to \eqref{eq12} blows up in finite time as long as $\kappa$ is large enough.
\end{corollary}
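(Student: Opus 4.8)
The plan is to observe that \eqref{eq12} is precisely the special case $b\equiv 0$ of \eqref{eq1}, so that every estimate used in the proof of Theorem \ref{theorem1}(a) applies with no essential change. Concretely, the mild solution of \eqref{eq12} satisfies \eqref{Soln} with the middle (drift) integral absent, and the It\^o isometry then gives, in place of \eqref{L_2-mild}, the identity
\begin{equation*}
\E |u(t,\,x)|^2=\left|\sP_tu_0(x)\right|^2+\int_0^t\int_\R\E |\sigma(u(s,\,y))|^2p^2(t-s,\,x-y)\,\d y\,\d s,
\end{equation*}
which is at least as strong as the lower bound exploited in the general case.

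Next I would simply trace through the proof of Theorem \ref{theorem1}(a) and record that the drift $b$ entered only through its positivity, which was used solely to discard the drift contribution from the relevant lower bounds --- both in the derivation of \eqref{L_2-mild} inside Proposition \ref{averageBU} and in the splitting $\E |u(t,\,x)|^2\ge I_1+I_2$ of Step 1. With $b\equiv 0$ there is nothing to discard, so the chain of inequalities producing
\begin{equation*}
Y(t)\geq \kappa^2 c_3 t^{4/\alpha}+c_2\int_{t_0}^{\tilde{c}_\alpha t}\frac{Y(s)^{1+\beta}}{s^{(3+4\beta)/\alpha}}\,\d s,\qquad Y(t):=F(t)t^{5/\alpha},
\end{equation*}
goes through verbatim (here $F$ is defined exactly as in \eqref{F}, now for the solution of \eqref{eq12}). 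Since $\alpha\le 2$ and $\beta>0$ force $(3+4\beta)/\alpha>1$, an application of Proposition \ref{ODE} --- in its notation with $a=\beta$ and the power of $s$ equal to $(3+4\beta)/\alpha>1$ --- shows that the comparison ODE $Y'(t)t^{(3+4\beta)/\alpha}=c_4Y(t)^{1+\beta}$ with initial value $Y(t_0)=c_3\kappa^2 t_0^{4/\alpha}$ blows up in finite time as soon as $Y(t_0)$ is large enough, i.e. as soon as $\kappa$ is large enough. By the comparison principle $F$ blows up in finite time as well, and Proposition \ref{averageBU}(a) upgrades this to finite-time blow-up of $\E |u(t,\,x)|^2$ for every $x\in\R$.

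There is no genuine obstacle here: the content of the corollary is entirely contained in Step 1 of the proof of Theorem \ref{theorem1}, and the only point requiring care is the bookkeeping observation that the drift never played a quantitative role in that argument, so its removal can only strengthen the lower bounds. One may also note that Condition \ref{B} becomes vacuous when $b\equiv 0$, which is why the statement of the corollary is phrased purely in terms of Condition \ref{A} and why there is no analogue of part (b).
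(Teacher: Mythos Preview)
Your proposal is correct and follows exactly the paper's own approach: the paper proves the corollary in a single line by setting $b(u)=0$ in Theorem~\ref{theorem1}, and your write-up is simply a careful unpacking of why Step~1 of that proof survives unchanged when the drift vanishes.
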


\section{Remarks and Extensions.} 

In \cite{Su}, the author considered the following deterministic equation,

\begin{equation*}\begin{split}
\partial_t u(t,\,x)&=-(-\Delta)^{\alpha/2} u(t,\,x)+f(u(t,\,x))\\
u(0\,,x)&=u_0(x),\\
\end{split}
\end{equation*}
 and explored non-existence of global solutions under certain suitable conditions on $f$ and the initial data $u_0(x)$.  More precisely, it was shown that if $f(x)\geq c|x|^{1+\beta}$ with $0<\beta\leq \alpha$, then the solution to the above equation blows up for any positive initial condition.   We can randomise the above equation by multiplying the forcing term by a white noise and hence end up with \eqref{eq1} but with $b(u)=0$. We can then make the following interesting observation. According to Corollary \ref{cor} finite time blow up occurs for large enough initial data even for $\beta \leq \alpha$. However, in the deterministic case, as shown in \cite{Su}, if $\beta \leq \alpha$, there is blow up for any initial data.  Thus in this case, one might say that the presence of noise might {\it hinder} blow up. That is there is a range of initial data, for which there is definite blow up in the deterministic case, but there {\it might not} be blow up in the stochastic case.  It is also interesting to note that that the same mechanism has been reported in the literature for the stochastic wave equation, when forced by a Wiener field \cite{Gao11}.

We end this paper with a couple of possible extensions.  Our main result was proved under the assumption that $d=1$. It would be interesting to prove a similar result for $d>1$ but with a noise which has some spatial covariance.  One would thus need to find suitable conditions on the spatial covariance as well. This might not be an easy task as has been shown in \cite{FK1} where such equations were studied.  One should also note that under Condition \ref{B}, blow up mainly occurs due to the additive non-linearity and the theorem is proved by looking at the expectation of the solution. Hence in this case, one can easily extend the result to higher dimensions.

Another interesting question is to consider blow up or global existence of \eqref{eq1}, when $b(u)$ plays the role of a damping term, instead of a source term. In this case there will be competition between $b(u)$ and $\sigma(u)\dot{w}(t,\,x)$, and possible blow up will ultimately depend on which term dominates.

\bibliographystyle{amsplain}%
\bibliography{FoonParsh}

\vskip.4cm
\begin{small}
\noindent\textbf{Mohammud Foondun} \& \textbf{Rana D. Parshad}\\
\noindent School  of Mathematics, Loughborough University,
	LE11 3TU, UK \& Mathematics and Computer Science and Engineering division , KAUST, Saudi Arabia\\
\noindent\emph{Emails:} \texttt{m.i.foondun@lboro.ac.uk} \&
	\texttt{Rana.Parshad@kaust.edu.sa}\\
\end{small}
\end{document}